\numberwithin{equation}{section}
\newtheorem{theorem}{Theorem}[section]
\newtheorem*{trm}{Theorem A}
\newtheorem{lemma}{Lemma}[section]
\newtheorem{proposition}{Proposition}[section]
\newtheorem{remark}{Remark}[section]
\title[Fractional Choquard--type equation in $\mathbb{R}$ with critical exponential growth]{Existence of solutions for a fractional Choquard--type equation in $\mathbb{R}$ with critical exponential growth}
\author{Rodrigo Clemente}
\address{Department of Mathematics, Rural Federal University of Pernambuco
	\newline\indent 
	52171-900, Recife-PE, Brazil}
\email{\href{mailto:rodrigo.clemente@ufrpe.br}{rodrigo.clemente@ufrpe.br}}
\author{Jos\'e Carlos de Albuquerque}
\address{Department of Mathematics, Federal University of Pernambuco
	\newline\indent
	50670-901, Recife-PE, Brazil}
\email{\href{mailto:josecarlos.melojunior@ufpe.br ; joserre@gmail.com}{josecarlos.melojunior@ufpe.br ; joserre@gmail.com}}
\author{Eudes Barboza}
\address{Department of Mathematics, Rural Federal University of Pernambuco
	\newline\indent 
	52171-900, Recife-PE, Brazil}
\email{\href{mailto:eudes.barboza@ufrpe.br ; eudesmendesbarboza@gmail.com}{eudes.barboza@ufrpe.br ; eudesmendesbarboza@gmail.com}}
\subjclass{35J20, 35J60, 35R11}
\keywords{Fractional Choquard--type equation, critical exponential growth, Trudinger-Moser inequality}
\begin{document}

\begin{abstract}
In this paper we study the following class of fractional Choquard--type equations
\[
(-\Delta)^{1/2}u + u=\Big( I_\mu \ast F(u)\Big)f(u), \quad x\in\mathbb{R},
\]
where $(-\Delta)^{1/2}$ denotes the $1/2$--Laplacian operator, $I_{\mu}$ is the Riesz potential with $0<\mu<1$ and $F$ is the primitive function of $f$. We use Variational Methods and minimax estimates to study the existence of solutions when $f$ has critical exponential growth in the sense of Trudinger--Moser inequality.
\end{abstract}
\maketitle

\section{Introduction}	

In this paper, we are concerned with existence of solutions for a class of fractional Choquard--type equations
\begin{equation}\label{i1}
(-\Delta)^{s}u + u=\Big( I_\mu \ast F(u)\Big)f(u), \quad x\in\mathbb{R}^{N},
\end{equation}
where $(-\Delta)^{s}$ denotes the fractional Laplacian, $0<s<1$, $0<\mu<N$, $F$ is the primitive function of $f$, $I_{\mu}:\mathbb{R}^{N}\backslash\{0\}\rightarrow\mathbb{R}$ is the Riesz potential defined by
\[
I_{\mu}(x):=\mathcal{A}_{\mu}\frac{1}{|x|^{N-\mu}}, \quad \mbox{where} \quad \mathcal{A}_{\mu}:=\dfrac{\displaystyle\Gamma\left(\frac{N-\mu}{2} \right)}{\displaystyle\Gamma\left(\frac{\mu}{2}\right)\pi^{\frac{N}{2}}2^{\mu}},
\]
and $\Gamma$ denotes the Gamma function. We consider the ``limit case" when $N=1$, $s=1/2$ and a Choquard--type nonlinearity with critical exponential growth motivated by a class of Trudinger--Moser inequality, see \cite{Ozawa,iulamar,Wadade,taka}. The main difficulty is to overcome the ``lack of compactness" inherent to problems defined on unbounded domains or involving nonlinearities with critical growth. In order to apply properly the Variational Methods, we control the minimax level with fine estimates involving Moser functions (see \cite{taka}), but here in the context of fractional Choquard--type equation. Before stating our assumptions and main result, we introduce a brief survey on related results to motivate our problem.\\

\noindent {\bf Motivation.} Nonlinear elliptic equations involving nonlocal operators have been widely studied both from a pure mathematical point of view and their concrete applications, since they naturally arise in many different contexts, such as, among the others, obstacle problems, flame propagation, minimal surfaces, conservation laws, financial market, optimization, crystal dislocation, phase transition and water waves, see for instance \cite{caffa,guia} and references therein. The class of equations \eqref{i1} is motivated by the search of standing wave solutions for the following class of time--dependent fractional Schr\"{o}dinger equations
\begin{equation}\label{1}
i\frac{\partial\Psi}{\partial t}=(-\Delta)^{s}\Psi+W(x)\Psi-\left(I_{\mu}\ast \vert \Psi \vert^{p} \right)\vert \Psi \vert^{p-2}\Psi, \quad (t,x)\in \mathbb{R}_{+}\times\mathbb{R}^{N},
\end{equation}
where $i$ denotes the imaginary unit, $p\geq 2$, $s\in(0,1)$ and $W(x)$ is a external potential. A standing wave solution of \eqref{1} is a solution of type $\Psi(x,t)=u(x)e^{-i\omega t}$, where $\omega\in\mathbb{R}$ and $u$ solves the stationary equation
\begin{equation}\label{4}
(-\Delta)^s u +V(x)u=\left( I_{\mu}\ast \vert u\vert^{p}\right)\vert u\vert^{p-2}u,  \quad  \text{in } \mathbb{R}^N,
\end{equation}
with $V(x)=W(x)-\omega$. In some particular cases, equation \eqref{4} is also known as the Schr\"{o}dinger-Newton equation which was introduced by R. Penrose in \cite{penrose} to investigate the self-gravitational collapse of a quantum mechanical wave function. It is well known that when $s\rightarrow1$, the fractional Laplacian $(-\Delta)^{s}$ reduces to the standard Laplacian $-\Delta$, see \cite{guia}. In the local case when $s=1$, $p>1$, $\mu=2$ and $N=3$, equation \eqref{i1} becomes the following nonlinear Choquard equation
\begin{equation}\label{choqu}
-\Delta u+u=\left(I_{2}\ast |u|^{p}\right)|u|^{p-2}u, \quad x\in\mathbb{R}^{3}.
\end{equation}
This case goes back to 1954, in the work \cite{pekar}, when S. Pekar described a polaron at rest in the quantum theory. In 1976, to model an electron trapped in its own hole, P. Choquard considered equation \eqref{choqu} as a certain approximation to Hartree--Fock theory of one-component plasma, see \cite{choquard}. For more information on physical background, we refer the readers to \cite{choq1,choq2}. 

There is a large bibliography regarding to Choquard--type equations in the case of the standard Laplacian operator. In this direction we refer the readers to the seminal works \cite{choquard,lions,moroz,moroz2} and references therein.

For dimension $N = 2$, the Trudinger--Moser inequality may be viewed as a substitute of the Sobolev inequality as it establishes a maximum growth for integrability of functions on $H^1(\mathbb{R}^2)$, see \cite{Miyagaki,ad}. The first version of the Trudinger--Moser inequality in $\mathbb{R}^2$ was established by D. Cao in \cite{cao} and this fact has inspired many works for elliptic equations including Choquard--type nonlinearities, see \cite{alvmimbo,alvesetal,Mimbo,subcritical} and references therein.\\

\noindent {\bf Assumptions and main Theorem.}  Inspired by \cite{alvesetal}, our goal is to establish a link between Choquard--type equations, $1/2$-- fractional Laplacian and nonlinearity with critical exponential growth. We are interested in the following class of problems
\begin{equation}\label{problem0}\tag{$\mathcal{P}$}
(-\Delta)^{1/2}u + u=\Big( I_\mu \ast F(u)\Big)f(u), \quad x\in\mathbb{R},
\end{equation}
where $F$ is the primitive of $f$. For simplicity, we consider $I_{\mu}=\vert x\vert^{-\mu}$. In order to use a variational approach, the maximal growth is motivated by the Trudinger--Moser inequality first given by T. Ozawa \cite{Ozawa} and later extended by S. Iula, A. Maalaoui, L. Martinazzi \cite{iulamar} (see also \cite{Wadade}). Precisely, it holds
\[
\sup_{\substack{u \in H^{1/2}(\mathbb{R})\\ \|(-\Delta)^{1/4}u\|_{2} \leq
1}}\int_\mathbb{R}(e^{\alpha u^{2}}-1) \, \mathrm{d}x \leq  	
\left\{\begin{array}{rl}
< \infty,&\alpha\leq \pi,\\
=\infty, &\alpha> \pi .
\end{array}\right.
\]
In this work we suppose that $f:\mathbb{R}\rightarrow\mathbb{R}$ is a continuous function satisfying the following hypotheses: 
\begin{enumerate}[label=$(f_\arabic*)$] 
\item\label{f1} $f(t)=0$, for all $t\leq0$ and $0\leq f(t)\leq Ce^{\pi t^{2}}$, for all $t\geq0$;
\item\label{f2} There exist $t_0$, $C_0>0$ and $a\in(0,1]$ such that $0<t^aF(t)\leq C_0f(t)$, for all $t\geq t_0$;
\item\label{f3} There exist $p>1-\mu$ and $C_p=C(p)>0$ such that $f(t)\thicksim C_pt^p$, as $t\rightarrow 0$;
\item\label{f4} There exists $K>1$ such that $KF(t)<f(t)t$ for all $t>0$, where $F(t)=\int_{0}^{t}f(\tau)\,\mathrm{d}\tau$;
\item\label{f5}   $\displaystyle\liminf_{t\rightarrow +\infty}\frac{F(t)}{e^{\pi t^2}}= \sqrt{\beta_0}$ with $\beta_0>0$.
\end{enumerate}

Assumption \ref{f5} plays a very important role in estimating the minimax level to recover some compactness in our approach. For this reason, we give a few comments on this hypothesis in different contexts. Since the pioneer works \cite{ad,Miyagaki}, many authors have used Moser functions to estimate the minimax level of functional associated to a problem involving a nonlinearity $f(t)$ with exponential growth. For this matter, usually one may uses the asymptotic behavior of $h(t)={f(t)t}/{e^{\alpha_0t^2}}$ at infinity, which can appear in different ways. For instance, in \cite{Miyagaki} the authors considered (among other conditions) that $\lim_{t\rightarrow \infty}h(t)= C(r)$, where $r$ is the radius of the largest open ball in the domain. In \cite{Pawan}, it was assumed that  $\lim_{t\rightarrow \infty}h(t)= \infty$ for an equation involving $1/2$-Laplacian operator. Regarding to Choquard--type equations, due to the problem nature, this type of hypothesis must be adapted, because we need to estimate some integrals where $tf(t)$ and $F(t)$  appear simultaneously. In \cite{alvesetal} it is considered that there exists a positive constant $\gamma_0>0$ large enough such that
\[\lim_{t\rightarrow \infty} \dfrac{t\tilde{f}(t)\tilde{F}(t)}{e^{8\pi t^2}}\geq\gamma_0, \]
while in \cite{Li} (see also \cite{Ub,nlaplacian}) it is assumed that there exists a positive constant $\xi_0>0$ large enough such that

\[
\displaystyle\lim_{t\rightarrow +\infty}\frac{\tilde{F}(t)}{e^{4\pi t^2}}\geq\xi_0.\]
In our case,  differently from \cite{Li}, it is not necessary assume any constraint on constant $\beta_0>0$ in assumption \ref{f5}, similarly as it occurs in \cite{Ub}.

We are in condition to state our main result:

\begin{theorem}\label{paper4A}
Suppose that $0<\mu<1$ and assumptions \ref{f1}--\ref{f5} hold. Then, Problem \eqref{problem0} has a nontrivial weak solution.
\end{theorem}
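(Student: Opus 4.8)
The plan is to apply the mountain pass theorem to the energy functional associated with \eqref{problem0}. First I would set up the variational framework: work in the fractional Sobolev space $H^{1/2}(\mathbb{R})$ with norm $\|u\|^2=\|(-\Delta)^{1/4}u\|_2^2+\|u\|_2^2$, and define
\[
J(u)=\frac12\|u\|^2-\frac12\int_{\mathbb{R}}\big(I_\mu\ast F(u)\big)F(u)\,\mathrm{d}x.
\]
Using the Hardy--Littlewood--Sobolev inequality together with \ref{f1} and \ref{f3}, one checks that the nonlocal term is well defined and of class $C^1$ on $H^{1/2}(\mathbb{R})$, so that critical points of $J$ are precisely weak solutions of \eqref{problem0}. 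The growth condition \ref{f3} with $p>1-\mu$ guarantees that the nonlocal term is superquadratic near zero in the right sense (after the convolution the effective exponent is $2p+\mu-1>1$... wait, one needs $2p > 2(1-\mu)$, i.e. $2p+2\mu-2>0$, which is exactly $p>1-\mu$), yielding the local geometry $J(u)\geq\rho>0$ on a small sphere; choosing a fixed $\varphi\geq 0$ and using \ref{f4} (which gives $F(t)\geq c\,t^{K}$ for large $t$ with $K>1$, hence the nonlocal term grows faster than $\|t\varphi\|^2$) produces $J(t_0\varphi)<0$ for some $t_0$ large. So $J$ has the mountain pass geometry and there is a Cerami (or Palais--Smale) sequence $(u_n)$ at the minimax level $c_{mp}$.

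Next I would establish the crucial \emph{upper estimate} $c_{mp}<\tfrac{1}{2}\big(\tfrac{1-\mu}{2}\big)$ (the sharp threshold dictated by the Ozawa--Iula--Maalaoui--Martinazzi inequality with $\alpha=\pi$, after accounting for the convolution). This is where assumption \ref{f5} enters decisively: testing the functional along the normalized Moser sequence
\[
\omega_n(x)=\frac{1}{\sqrt{\pi}}\begin{cases}\sqrt{\log n},& |x|\le 1/n,\\[2pt] \dfrac{\log(1/|x|)}{\sqrt{\log n}},& 1/n\le|x|\le 1,\\[2pt] 0,&|x|\ge 1,\end{cases}
\]
(suitably truncated/rescaled for $H^{1/2}$), one computes $\max_{t\ge0}J(t\omega_n)$ and shows, using the lower bound on $F$ coming from \ref{f5}, that this max is strictly below the threshold for $n$ large. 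The delicate point is that after convolving with $I_\mu$ the exponential integral $\int(I_\mu\ast F(t\omega_n))F(t\omega_n)$ picks up a factor involving the measure of the ball $B_{1/n}$, so the critical exponent effectively becomes $\pi$ rather than $2\pi$; assumption \ref{f5} with any $\beta_0>0$ is exactly strong enough (no largeness of $\beta_0$ needed, as remarked in the text) to win this competition. This step is the main obstacle, because it requires sharp asymptotics of the fractional Moser functions together with careful treatment of the double (convolution) integral.

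Finally I would prove compactness of the Cerami sequence below the threshold. Using \ref{f4} one shows $(u_n)$ is bounded in $H^{1/2}(\mathbb{R})$; up to a subsequence $u_n\rightharpoonup u$. Because \eqref{problem0} is set on all of $\mathbb{R}$, I would rule out vanishing via a Lions-type concentration argument (the minimax level being positive forces a nontrivial profile), so that $u\neq 0$ after a translation if necessary — here one uses that the equation is translation invariant. Then, exploiting $c_{mp}$ lying below the Trudinger--Moser threshold, I would obtain a uniform bound $\|(-\Delta)^{1/4}u_n\|_2^2\le 1-\delta$ on the relevant pieces, so that $\{e^{\pi u_n^2}\}$ is bounded in some $L^q$, $q>1$; this gives equi-integrability of the nonlinear terms $\big(I_\mu\ast F(u_n)\big)f(u_n)$ (again via Hardy--Littlewood--Sobolev combined with the generalized Lebesgue/Vitali convergence theorem and the pointwise convergence $F(u_n)\to F(u)$, $f(u_n)\to f(u)$), allowing passage to the limit in $J'(u_n)\to 0$. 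Hence $J'(u)=0$ with $u\neq 0$, i.e. $u$ is a nontrivial weak solution, and one checks $u\geq 0$ using \ref{f1} (testing against $u^-$). A standard Brezis--Lieb / splitting argument shows the limit is attained at level $\le c_{mp}$, closing the proof.
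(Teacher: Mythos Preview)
Your overall architecture---mountain pass geometry, minimax estimate via Moser functions, boundedness of the Palais--Smale sequence, Lions concentration--compactness plus translation invariance---is exactly the paper's. Two points, however, need correction.

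\medskip
\textbf{The threshold.} The correct upper bound is $c<\dfrac{2-\mu}{4}$, not $\dfrac{1-\mu}{4}$. The Hardy--Littlewood--Sobolev inequality with $N=1$ forces $F(u)\in L^{2/(2-\mu)}(\mathbb{R})$, so the exponential one must control is $e^{\frac{2\pi}{2-\mu}u^{2}}$; Trudinger--Moser then requires $\|u\|_{1/2}^{2}\le\frac{2-\mu}{2}$, and half of this gives the level $\frac{2-\mu}{4}$. Your constant would vanish as $\mu\uparrow 1$ and the Moser-sequence estimate of Proposition~\ref{mv} would no longer close.

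\medskip
\textbf{Passing to the limit.} Your plan is: first rule out vanishing and translate so that $u\neq 0$, \emph{then} use the sub-threshold bound to get $\|u_{n}\|_{1/2}^{2}\le 1-\delta$ (on ``relevant pieces'') and hence equi-integrability of $\big(I_{\mu}\ast F(u_{n})\big)f(u_{n})$. The difficulty is that the inequality $\|u_{n}\|_{1/2}^{2}\le \frac{2-\mu}{2}(1-\delta)$ is obtained in the paper only \emph{under the hypothesis that vanishing occurs} (equivalently, that the weak limit is $0$): one first shows $\int_{\mathbb{R}}(I_{\mu}\ast F(u_{n}))F(u_{n})\to 0$, whence $\|u_{n}\|_{1/2}^{2}\to 2c$, and \emph{then} uses this to derive $\int_{\mathbb{R}}(I_{\mu}\ast F(u_{n}))f(u_{n})u_{n}\to 0$, contradicting $c>0$. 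Once you already know $u\neq 0$, no such norm bound is available, and your equi-integrability route stalls.

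The paper sidesteps this by proving, \emph{before} any dichotomy, that the weak limit is always a weak solution. The device is to test \eqref{epsilon} with $v_{n}=\phi/(1+u_{n})$ for a cutoff $\phi$, which yields a uniform local $L^{1}$ bound on $(I_{\mu}\ast F(u_{n}))\dfrac{f(u_{n})}{1+u_{n}}$; combined with the bound on $(I_{\mu}\ast F(u_{n}))f(u_{n})u_{n}$ coming from the (PS) condition, a Radon--Nikodym argument gives
\[
\int_{\mathbb{R}}(I_{\mu}\ast F(u_{n}))f(u_{n})\phi\,\mathrm{d}x\longrightarrow \int_{\mathbb{R}}(I_{\mu}\ast F(u))f(u)\phi\,\mathrm{d}x,\qquad \phi\in C_{c}^{\infty}(\mathbb{R}),
\]
without any Trudinger--Moser control on the full sequence. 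The threshold $c<\frac{2-\mu}{4}$ is then used \emph{only} to exclude vanishing (and hence, via translation, to guarantee $u\neq 0$). If you keep your ordering, you must supply an alternative to this step; a Brezis--Lieb splitting alone does not give the needed exponential integrability.
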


\begin{remark}
	Though there has been many works on the existence of solutions for problem \eqref{i1}, as far as we know, this is the first work considering a fractional Choquard--type equation involving $1/2$--Laplacian operator and nonlinearity with critical exponential growth. Particularly, our Theorem \ref{paper4A} is a version of Theorem 1.3 of \cite{alvesetal} for $1/2$-Laplacian operator.
\end{remark}

\begin{remark}
	Assumptions \ref{f2} and \ref{f5} imply the asymptotic behavior of $tf(t)F(t)/e^{2\pi t^2}$ at infinity. Precisely, for given $\varepsilon>0$, there exists $t_0>0$ such that 
	\begin{equation}\label{HUB}
	{tf(t)F(t)}\geq (\beta_0 -\varepsilon)C_0{e^{2\pi t^2}} t^{a+1}, \quad \mbox{for all } t>t_{0}.
	\end{equation}
	This behaviour plays a very important role to estimate the minimax level associated to Problem \eqref{problem0}, using a version of Moser functions for problems involving $1/2$--Laplacian operator.
\end{remark}

\noindent {\bf Outline.} The paper is organized as follows: In the forthcoming Section we recall some definitions and preliminary basic results which are important to prove our main result. In Section \ref{tpmm} we introduce the variational setting and we study the mountain pass geometry. Section \ref{mini} is devoted to study minimax estimates. Precisely, we establish an upper estimate of the minimax level that guarantees some compactness of Palais--Smale sequences. In Section \ref{fs} we prove Theorem \ref{paper4A}.

\section{Preliminaries}

We start this Section recalling some preliminary concepts about the fractional operator, for a more complete discussion we cite \cite{guia}. For $s\in(0,1)$, the \textit{fractional Laplacian operator} of a measurable function $u:\mathbb{R}^{N}\rightarrow\mathbb{R}$ may be defined by
$$
(-\Delta)^{s}u(x)=-\frac{C(N,s)}{2}\int_{\mathbb{R}^{N}}\frac{u(x+y)+u(x-y)-2u(x)}{|y|^{N+2s}}\;\mathrm{d} y, \quad \mbox{for all} \hspace{0,2cm} x\in\mathbb{R}^{N},
$$
for some normalizing constant $C(N,s)$. The particular case when $s=1/2$ its called the \textit{square root of the Laplacian}. We recall the definition of the fractional Sobolev space
$$
H^{1/2}(\mathbb{R})=\left\{u\in L^{2}(\mathbb{R}):\int_{\mathbb{R}}\int_{\mathbb{R}}\frac{|u(x)-u(y)|^{2}}{|x-y|^{2}}\;\mathrm{d} x\mathrm{d} y<\infty\right\},
$$
endowed with the standard norm
$$
\|u\|_{1/2}=\left(\frac{1}{2\pi}[u]_{1/2}^{2}+\int_{\mathbb{R}}u^{2}\;\mathrm{d} x\right)^{1/2},
$$
where the term 
$$
[u]_{1/2}=\left(\int_{\mathbb{R}}\int_{\mathbb{R}}\frac{|u(x)-u(y)|^{2}}{|x-y|^{2}}\;\mathrm{d} x\mathrm{d} y\right)^{1/2}
$$
is the so-called \textit{Gagliardo semi-norm} of the function $u$. We point out from \cite[Proposition 3.6]{guia} that
\begin{equation}\label{normas}
\|(-\Delta)^{1/4}u\|_{L^{2}(\mathbb{R})}^{2}=\frac{1}{2\pi}\int_{\mathbb{R}}\int_{\mathbb{R}}\frac{|u(x)-u(y)|^{2}}{|x-y|^{2}}\;\mathrm{d} x\mathrm{d} y, \quad \mbox{for all} \hspace{0,2cm} u\in H^{1/2}(\mathbb{R}).
\end{equation}

In order to deal with the exponential growth we use the following result due to S. Iula, A. Maalaoui, L. Martinazzi, see \cite[Theorem 1.5]{iulamar}:

\begin{trm}[Fractional Trudinger-Moser inequality]\label{tmm}
We have
\[
\sup_{\substack{u \in H^{1/2}(\mathbb{R})\\ \|u\|_{1/2} \leq
1}}\int_\mathbb{R}(e^{\pi u^{2}}-1) \, \mathrm{d}x <+\infty.
\]
Moreover, for any $a>2$,
\[
\sup_{\substack{u \in H^{1/2}(\mathbb{R})\\ \|u\|_{1/2} \leq
1}}\int_\mathbb{R}|u|^a(e^{\pi u^{2}}-1) \, \mathrm{d}x =+\infty.
\]
\end{trm}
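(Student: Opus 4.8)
The plan is to establish the stated supremum bound and the sharpness (divergence) separately, following the strategy of Iula--Maalaoui--Martinazzi in the setting of the half-Laplacian on the line. For the finiteness assertion, the first step is to reduce the problem on $\mathbb{R}$ to a more tractable form. Since $\|u\|_{1/2}^2 = \|(-\Delta)^{1/4}u\|_{L^2(\mathbb{R})}^2 + \|u\|_{L^2(\mathbb{R})}^2 \le 1$ (using \eqref{normas}), one can split $\int_\mathbb{R}(e^{\pi u^2}-1)\,\mathrm{d}x$ over the sub-level and super-level sets of $|u|$. On the region where $|u|\le 1$, one uses $e^{\pi u^2}-1 \le (e^\pi - 1) u^2$ (since $t\mapsto (e^{\pi t}-1)/t$ is increasing on $(0,1]$) to bound that contribution by a constant multiple of $\|u\|_{L^2}^2 \le 1$. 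The nontrivial region is where $|u|>1$; there the local-in-space concentration must be controlled. The key analytic input is a \emph{local} Trudinger--Moser estimate: for any bounded interval $I\subset\mathbb{R}$, if $u\in H^{1/2}(\mathbb{R})$ has $\|(-\Delta)^{1/4}u\|_{L^2(\mathbb{R})}\le 1$ then $\int_I (e^{\pi u^2}-1)\,\mathrm{d}x \le C(|I|)$. This local estimate is obtained from the sharp fractional Trudinger--Moser inequality on bounded intervals (Ozawa's inequality, or the trace characterization of $H^{1/2}(\mathbb{R})$ as traces of $H^1(\mathbb{R}^2_+)$, reducing to the Moser--Trudinger inequality in the half-plane with the sharp constant $\pi$ matching $4\pi$ under the trace normalization). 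Then a covering argument together with the $L^2$-bound $\|u\|_{L^2(\mathbb{R})}\le 1$ limits the number of disjoint unit intervals on which $u$ can carry non-negligible mass, yielding the uniform global bound.

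For the sharpness statement, the second step is to exhibit, for each $a>2$, a sequence $(u_n)\subset H^{1/2}(\mathbb{R})$ with $\|u_n\|_{1/2}\le 1$ along which $\int_\mathbb{R}|u_n|^a(e^{\pi u_n^2}-1)\,\mathrm{d}x\to\infty$. The natural candidates are the fractional Moser functions: a logarithmically-truncated family, $u_n(x)$ essentially equal to a constant $\sim \sqrt{(\log n)/\pi}$ on a small interval of length $\sim 1/n$, decaying like a multiple of $(\log n)^{-1/2}\log(1/|x|)$ (suitably cut off) on an annular region $1/n \le |x| \le 1$, and $0$ outside, normalized so that $\|(-\Delta)^{1/4}u_n\|_{L^2}^2 = 1 - o(1)$ and $\|u_n\|_{L^2}^2 = o(1)$, hence $\|u_n\|_{1/2}\le 1$ after a harmless rescaling. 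On the central interval one has $e^{\pi u_n^2}-1 \sim n$ and $|u_n|^a \sim ((\log n)/\pi)^{a/2}$, while the interval has length $\sim 1/n$, so the central contribution is of order $(\log n)^{a/2}$, which diverges. (This is precisely where the power $a$ must be strictly positive — and in fact the borderline $a=0$ case is exactly the convergent inequality, while any $a>0$ already forces divergence; the restriction $a>2$ in the statement is simply the range needed in the application and follows a fortiori.)

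The main obstacle is the finiteness half: establishing the sharp local fractional Trudinger--Moser inequality with the exact constant $\pi$ and then upgrading it to a \emph{uniform} bound over all of $\mathbb{R}$ without losing the sharp constant. The delicate point is that a naive partition-of-unity argument degrades the constant (cross terms from the Gagliardo seminorm are nonlocal), so one must instead argue via symmetrization/rearrangement — reducing to radially symmetric decreasing $u$, for which the half-Laplacian energy is controlled — combined with a precise tail estimate showing that the $L^2$-constraint confines the essential support, allowing the bounded-interval result to be applied on a single interval with the sharp constant intact. Once the uniform local bound with constant $\pi$ is in hand, summing the at most finitely many significant pieces and adding the elementary small-$|u|$ contribution completes the argument. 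Since this is precisely \cite[Theorem 1.5]{iulamar}, in the body of the paper we simply invoke that reference; the sketch above indicates the structure of the proof behind it.
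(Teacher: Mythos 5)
The paper does not prove Theorem A at all: it is imported verbatim from \cite[Theorem 1.5]{iulamar} (with the Moser-function norm estimates needed elsewhere taken from \cite{taka}), and since your proposal ultimately just invokes that same reference, your treatment matches the paper's. Your accompanying sketch is consistent with the cited result — indeed your central-interval Moser computation for the divergence half is essentially the estimate the paper itself carries out in Proposition \ref{mv}, and your parenthetical that any $a>0$ already forces divergence, though stronger than the quoted statement, does follow from the same bounds $\|(-\Delta)^{1/4}u_n\|_{L^2}^2\le \pi\bigl(1+O(1/\ln n)\bigr)$ and $\|u_n\|_{L^2}^2=O(1/\ln n)$ — but keep in mind that the finiteness half of your outline (local estimate plus covering/symmetrization) is only heuristic, since retaining the sharp constant $\pi$ on all of $\mathbb{R}$ is precisely the nontrivial content of the citation, which is why the paper (and you) defer to \cite{iulamar} rather than argue it directly.
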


The \textit{vanishing lemma} was proved originally by P.L.~Lions \cite[Lemma~I.1]{lionss} and here we use the following version to fractional Sobolev spaces:

\begin{lemma}\label{lion}
Assume that $(u_{n})$ is a bounded sequence in $H^{1/2}(\mathbb{R})$ satisfying
\begin{align*}
\lim_{n\rightarrow+\infty}\sup_{y\in\mathbb{R}}\int_{y-R}^{y+R}|u_{n}|^{2}\;\mathrm{d} x=0,
\end{align*}
for some $R>0$. Then, $u_{n}\rightarrow0$ strongly in $L^{p}(\mathbb{R})$, for $2<p<\infty$.
\end{lemma}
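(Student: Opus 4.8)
The plan is to adapt to the fractional setting the classical covering argument of P.-L.~Lions, replacing the local Gagliardo--Nirenberg inequality used for $H^1$ by the continuous embedding $H^{1/2}(I)\hookrightarrow L^r(I)$ on bounded intervals $I$ (this is the critical embedding case $W^{1/2,2}$ in dimension one, so it holds for every $r\in[2,\infty)$, see \cite{guia}), combined with elementary Lyapunov interpolation between $L^2(I)$ and $L^r(I)$. Fix $p\in(2,\infty)$. First I would pick $r\in(p,\infty)$ and $\theta\in(0,1)$ with $\tfrac1p=\tfrac{1-\theta}{2}+\tfrac\theta r$; a direct computation gives $\theta=\tfrac{(p-2)r}{(r-2)p}$, so $\theta p\to p>2$ as $r\downarrow p$, and I fix $r$ close enough to $p$ that $\theta p>2$, while $\theta<1$ keeps $(1-\theta)p>0$. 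Cover $\mathbb{R}$ by the intervals $I_k=(kR-R,\,kR+R)$, $k\in\mathbb{Z}$, each of length $2R$ and with overlap at most $2$.

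On each $I_k$, Lyapunov interpolation yields $\|u_n\|_{L^p(I_k)}\le\|u_n\|_{L^2(I_k)}^{1-\theta}\|u_n\|_{L^r(I_k)}^{\theta}$, and the fractional Sobolev embedding on the interval $I_k$ (with a constant $C$ independent of $k$, since all the $I_k$ are translates of one another) gives $\|u_n\|_{L^r(I_k)}\le C\|u_n\|_{H^{1/2}(I_k)}$. Raising to the power $p$ and summing over $k$,
\[
\|u_n\|_{L^p(\mathbb{R})}^p\le\sum_{k\in\mathbb{Z}}\|u_n\|_{L^p(I_k)}^p\le C\Big(\sup_{k\in\mathbb{Z}}\|u_n\|_{L^2(I_k)}^{2}\Big)^{(1-\theta)p/2}\,\sum_{k\in\mathbb{Z}}\|u_n\|_{H^{1/2}(I_k)}^{\theta p}.
\]
To control the last sum I would use the finite-overlap subadditivity $\sum_{k}\|u_n\|_{H^{1/2}(I_k)}^2\le 2\|u_n\|_{1/2}^2$: indeed $\sum_k\int_{I_k}u_n^2\le 2\int_{\mathbb{R}}u_n^2$, and $\sum_k\int_{I_k}\int_{I_k}\frac{|u_n(x)-u_n(y)|^2}{|x-y|^2}\,\mathrm{d}x\,\mathrm{d}y\le 2[u_n]_{1/2}^2$ because for fixed $(x,y)$ one has $\#\{k:(x,y)\in I_k\times I_k\}\le\#\{k:x\in I_k\}\le 2$.

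Now, writing $b_k^n:=\|u_n\|_{H^{1/2}(I_k)}^2$, since $(u_n)$ is bounded in $H^{1/2}(\mathbb{R})$ and $\theta p\ge 2$ we obtain, uniformly in $n$,
\[
\sum_{k\in\mathbb{Z}}(b_k^n)^{\theta p/2}\le\Big(\sup_{k}b_k^n\Big)^{\theta p/2-1}\sum_{k}b_k^n\le C\Big(\sup_{n}\|u_n\|_{1/2}\Big)^{\theta p}<\infty,
\]
while $\sup_{k}\|u_n\|_{L^2(I_k)}^2\le\sup_{y\in\mathbb{R}}\int_{y-R}^{y+R}|u_n|^2\,\mathrm{d}x\to 0$ by hypothesis and $(1-\theta)p/2>0$; hence the right-hand side of the displayed inequality tends to $0$, i.e. $u_n\to 0$ in $L^p(\mathbb{R})$. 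The two points that really matter here are: (i) choosing the interpolation exponent so that the Sobolev norm enters with power $\theta p\ge 2$, which is exactly what turns the sum over the covering into a bounded quantity rather than a (possibly divergent) series of vanishing terms; and (ii) the finite-overlap estimate for the Gagliardo semi-norm, which, unlike the purely local $H^1$ case, must be verified at the level of the double integral. Everything else is routine. (No conclusion for $p=2$ is to be expected, in agreement with the statement: an $L^2$-normalized bump spread over an interval of growing length satisfies the hypothesis but does not converge to $0$ in $L^2$.)
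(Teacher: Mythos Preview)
The paper does not supply its own proof of this lemma: it is stated as a known fractional version of Lions' concentration--compactness vanishing lemma, with a reference to \cite{lionss}, and then used as a black box in Section~\ref{fs}. There is therefore no proof in the paper to compare your argument against.

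That said, your argument is correct and is precisely the standard adaptation of Lions' covering proof to the fractional setting. The two points you single out are indeed the ones that require attention: (i) the choice of $r$ close to $p$ so that the interpolation exponent satisfies $\theta p>2$, which makes the $\ell^{\theta p/2}$ sum of the local $H^{1/2}$-norms controllable by the global norm via $\sum_k (b_k^n)^{\theta p/2}\le(\sup_k b_k^n)^{\theta p/2-1}\sum_k b_k^n$; and (ii) the finite-overlap subadditivity for the Gagliardo double integral, which you verify by noting that a pair $(x,y)$ can lie in $I_k\times I_k$ for at most as many $k$ as contain $x$, hence at most two. One very minor remark: the embedding constant in $H^{1/2}(I_k)\hookrightarrow L^r(I_k)$ being independent of $k$ follows immediately from translation invariance, as you say; it is worth being explicit that the $H^{1/2}(I_k)$-norm you use is the intrinsic one (double integral over $I_k\times I_k$), since that is what your subadditivity estimate controls. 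Everything else is routine, and your closing comment on the failure at $p=2$ is apt.
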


In order to study the convolution term we use the Hardy-Littlewood-Sobolev inequality, which can be stated as follows:
\begin{lemma}[Hardy-Littlewood-Sobolev inequality]
Let $1<r,t<\infty$ and $0<\mu<N$ with $1/r+1/t+\mu/N=2$. If $f\in L^{r}(\mathbb{R}^{N})$ and $h\in L^{t}(\mathbb{R}^{N})$, then there exists a sharp constant $C=C(r,t,\mu)>0$, independent of $f$ and $h$, such that
\[
\int_{\mathbb{R}^{N}}\int_{\mathbb{R}^{N}}\frac{f(x)h(y)}{|x-y|^{\mu}}\,\mathrm{d}x\mathrm{d}y\leq C\|f\|_{L^{r}(\mathbb{R}^{N})}\|h\|_{L^{t}(\mathbb{R}^{N})}.
\]
\end{lemma}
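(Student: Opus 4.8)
The plan is to deduce the bilinear estimate from the corresponding mapping property of the Riesz potential, and to treat the qualitative inequality and the sharpness of the constant by genuinely different arguments. For the first part, I would observe that by duality in $L^t$ the claimed estimate is equivalent to the boundedness of the operator
\[
I_\mu f(x):=\int_{\mathbb{R}^N}\frac{f(y)}{|x-y|^\mu}\,\mathrm{d}y
\]
from $L^r(\mathbb{R}^N)$ into $L^{t'}(\mathbb{R}^N)$, where $t'$ is the conjugate exponent of $t$: indeed, by Fubini,
\[
\int_{\mathbb{R}^N}\int_{\mathbb{R}^N}\frac{f(x)h(y)}{|x-y|^\mu}\,\mathrm{d}x\,\mathrm{d}y=\int_{\mathbb{R}^N}h\,(I_\mu f)\le\|h\|_{L^t}\,\|I_\mu f\|_{L^{t'}},
\]
and the hypothesis $1/r+1/t+\mu/N=2$ translates precisely into $1/t'=1/r-(N-\mu)/N$, which is the dilation‑admissible relation for $I_\mu\colon L^r\to L^{t'}$. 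I would also record that $1<r<\infty$ together with $1/r+\mu/N=2-1/t>1$ (using $1/t<1$); both facts are needed below.

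For the boundedness of $I_\mu$ I would run Hedberg's pointwise argument. Fix $\rho>0$ and split $I_\mu f(x)=\int_{|x-y|<\rho}+\int_{|x-y|\ge\rho}$. Decomposing the near region into dyadic annuli and bounding the integral over each by an average of $|f|$ over a ball yields
\[
\int_{|x-y|<\rho}\frac{|f(y)|}{|x-y|^\mu}\,\mathrm{d}y\le C\rho^{N-\mu}\,Mf(x),
\]
where $M$ is the Hardy--Littlewood maximal operator (the series converges because $N-\mu>0$). For the far region, Hölder's inequality gives
\[
\int_{|x-y|\ge\rho}\frac{|f(y)|}{|x-y|^\mu}\,\mathrm{d}y\le C\rho^{\,N-\mu-N/r}\,\|f\|_{L^r},
\]
the exponent being negative precisely because $1/r+\mu/N>1$. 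Optimizing the sum of the two bounds over $\rho>0$ gives the interpolation inequality
\[
I_\mu f(x)\le C\,(Mf(x))^{\theta}\,\|f\|_{L^r}^{1-\theta},\qquad \theta=\frac{r}{t'}\in(0,1).
\]
Raising to the power $t'$, integrating over $\mathbb{R}^N$ (note $\theta t'=r$), and invoking the maximal theorem $\|Mf\|_{L^r}\le C\|f\|_{L^r}$, valid since $r>1$, yields $\|I_\mu f\|_{L^{t'}}\le C\|f\|_{L^r}$, hence the bilinear inequality with a finite constant. An equivalent route I could substitute is to prove the weak‑type endpoint bound $I_\mu\colon L^r\to L^{t',\infty}$ by estimating the distribution function of $I_\mu f$ after splitting the kernel at a level depending on that distribution function, and then to interpolate between two such endpoint estimates with the Marcinkiewicz theorem; this avoids the maximal function but requires choosing two admissible exponent pairs.

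The delicate point is sharpness, i.e. that the best constant is actually attained. Here I would first invoke the Riesz rearrangement inequality $\iint f(x)g(y)h(x-y)\,\mathrm{d}x\,\mathrm{d}y\le\iint f^*(x)g^*(y)h^*(x-y)\,\mathrm{d}x\,\mathrm{d}y$ applied to the triple $(f,h,|\cdot|^{-\mu})$ — whose third factor equals its own symmetric decreasing rearrangement — to reduce the computation of the optimal constant to nonnegative, radially symmetric, nonincreasing $f$ and $h$; this both compactifies the variational problem modulo dilations and introduces the symmetry needed afterwards. The remaining step, following Lieb, exploits the conformal invariance of the functional: pulling back to the sphere $S^N$ via stereographic projection, in the diagonal case $r=t=\tfrac{2N}{2N-\mu}$ the extremals are conformal images of constants, giving the explicit sharp constant and the maximizers $(1+|x|^2)^{-(2N-\mu)/2}$, while for general admissible $(r,t)$ one obtains existence of maximizers through a competing‑symmetries (or concentration‑compactness) argument combined with the rearrangement reduction. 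This conformal‑geometric analysis is the main obstacle and is substantially harder than the boundedness itself; I would remark, however, that only the finiteness of $C$ — obtained from the first two steps — is needed for the applications in this paper, so the sharpness claim may legitimately be quoted from Lieb's work rather than reproved.
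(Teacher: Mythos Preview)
Your argument is correct: the duality reduction to the mapping property of the Riesz potential, Hedberg's pointwise bound $I_\mu f\le C(Mf)^{r/t'}\|f\|_{L^r}^{1-r/t'}$ obtained by splitting at a scale $\rho$ and optimizing, and the appeal to the Hardy--Littlewood maximal theorem all go through exactly as you describe, with the exponent checks ($N-\mu>0$ for the near part, $1/r+\mu/N>1$ for the far part) properly verified. The sharpness discussion via Riesz rearrangement and Lieb's conformal method is also accurate, and your closing remark that only finiteness of the constant is used in the paper is spot on.

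As for comparison with the paper: there is nothing to compare. The paper states the Hardy--Littlewood--Sobolev inequality as a preliminary tool without proof; it is simply quoted as a classical result and then applied (to control the convolution term in the energy functional and in the Palais--Smale analysis). So you have supplied considerably more than the paper does. If anything, you could shorten your write-up by citing the inequality directly---as the paper does---and omitting the sharpness paragraph entirely, since neither the existence of extremizers nor the explicit value of the best constant plays any role in the subsequent arguments.
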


\section{The Variational Framework}\label{tpmm}

In this section we introduce the variational framework to our problem. The energy functional $I: H^{1/2}(\mathbb{R}) \rightarrow \mathbb{R}$ associated to Problem \eqref{problem0} is defined by
\begin{equation}\label{ef} 
I(u)=\frac{1}{2}\|u\|_{1/2}^{2}-\frac{1}{2}\int_{\mathbb{R}} (I_{\mu}\ast F(u))F(u)\, \mathrm{d} x
\end{equation}
where $\displaystyle{F(t)=\int_0^t f(\tau)\;\mathrm{d} \tau}$. By using assumptions \ref{f1} and \ref{f3}, it follows that for each  $q>2$ and $\varepsilon>0$ there exists $C_{\varepsilon}>0$ such that 
\begin{equation}\label{growth}
f(t)\leq \varepsilon |t|^{1-\mu}+C_{\varepsilon}(e^{\pi t^{2}}-1)|t|^{q-1}, \quad \mbox{for all} \hspace{0,2cm} t\in\mathbb{R},
\end{equation} 
which implies that
\begin{equation}\label{growth2}
F(t)\leq \tilde{\varepsilon}|t|^{2-\mu}+\tilde{C_{\varepsilon}}(e^{\pi t^{2}}-1)|t|^{q}, \quad \mbox{for all} \hspace{0,2cm} t\in\mathbb{R}.
\end{equation}
In view of the above estimates jointly with Hardy-Littlewood-Sobolev inequality, $I$ is well defined in $H^{1/2}(\mathbb{R})$. Furthermore, $I\in C^{1}(H^{1/2}(\mathbb{R}),\mathbb{R})$ and
\[
I^{\prime}(u)v={\frac{1}{2\pi}}\int_{\mathbb{R}}\int_{\mathbb{R}}\frac{[u(x)-u(y)][v(x)-v(y)]}{\vert x-y \vert^2}\,\mathrm{d}x\mathrm{d}y+\int_{\mathbb{R}}uv\,\mathrm{d}x-\int_{\mathbb{R}}\left(I_{\mu}\ast F(u)\right)f(u)v\,\mathrm{d}x.
\]
Thus, critical points of $I$ are weak solutions of Problem \eqref{problem0} and conversely.

Now, we prove that the energy functional defined in \eqref{ef} satisfies the Mountain Pass Geometry.

\begin{lemma}\label{01}
Suppose that \ref{f1} and \ref{f3} are satisfied. Then, the following conclusions hold:
\begin{itemize}
\item[(i)] There exist $\tau>0$ and $\varrho>0$ such that $I(u)\geq\tau$, provided that $\|u\|_{1/2}=\varrho$.
\item[(ii)] There exists $v\in H^{1/2}(\mathbb{R})$ with $\|v\|_{1/2}>\varrho$ such that $I(v)<0$.
\end{itemize}
\end{lemma}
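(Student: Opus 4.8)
The plan is to verify the two mountain-pass conditions separately, using the growth estimates \eqref{growth}, \eqref{growth2} together with the Hardy--Littlewood--Sobolev inequality and the fractional Trudinger--Moser inequality (Theorem A, \ref{tmm}).

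\textbf{Part (i): local minimum geometry at the origin.}
First I would estimate the convolution term for small $\|u\|_{1/2}$. Using \eqref{growth2}, write $F(u) \leq \tilde\varepsilon|u|^{2-\mu} + \tilde C_\varepsilon(e^{\pi u^2}-1)|u|^q$ and apply the Hardy--Littlewood--Sobolev inequality with exponents $r=t=2/(2-\mu/N)$, i.e. here $r=t=\frac{2}{2-\mu}$ (legitimate since $0<\mu<1$, so $1<r<\infty$), to get
\[
\int_{\mathbb{R}}(I_\mu\ast F(u))F(u)\,\mathrm{d}x \leq C\|F(u)\|_{L^{r}(\mathbb{R})}^2.
\]
Then I would split $\|F(u)\|_r$ by Minkowski's inequality into a term controlled by $\|u\|_{L^{(2-\mu)r}}^{2-\mu}$ and a term controlled by $\big\||u|^q(e^{\pi u^2}-1)\big\|_{L^r}$. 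The first is handled by the continuous Sobolev embedding $H^{1/2}(\mathbb{R})\hookrightarrow L^{p}(\mathbb{R})$ for all $p\in[2,\infty)$. For the second, restrict to $\|u\|_{1/2}\leq M$ for some small $M<1$; then by H\"older's inequality $\big\||u|^q(e^{\pi u^2}-1)\big\|_{L^r} \leq \big\||u|^{qr'}\big\|^{1/r'}\cdot\big\|(e^{\pi r''u^2}-1)\big\|^{1/r''}$ for suitable conjugate exponents, and choosing $M$ small enough that $\pi r'' M^2 \leq \pi$ we invoke Theorem A to bound the exponential factor uniformly; the remaining power of $u$ is again controlled by a Sobolev embedding. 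Collecting everything yields
\[
\int_{\mathbb{R}}(I_\mu\ast F(u))F(u)\,\mathrm{d}x \leq C_1\|u\|_{1/2}^{2(2-\mu)} + C_2\|u\|_{1/2}^{\theta}
\]
with $\theta > 2$ (here one needs $q$ chosen so that the combined power exceeds $2$; note $2(2-\mu)>2$ already since $\mu<1$). Therefore $I(u)\geq \tfrac12\|u\|_{1/2}^2 - C_1\|u\|_{1/2}^{2(2-\mu)} - C_2\|u\|_{1/2}^\theta$, and since both exponents on the right exceed $2$, there is $\varrho>0$ small so that $I(u)\geq \tau>0$ whenever $\|u\|_{1/2}=\varrho$.

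\textbf{Part (ii): a direction along which $I$ goes to $-\infty$.}
I would fix any nonnegative $w\in H^{1/2}(\mathbb{R})\setminus\{0\}$ with $w\not\equiv 0$ on the set where $f$ is positive (e.g. $w$ compactly supported, $w>0$ on its support), and study $t\mapsto I(tw)$ for $t>0$. From \ref{f4}, integrating the inequality $KF(t)<f(t)t$ one obtains a super-homogeneity estimate $F(t)\geq c\,t^{K}$ for $t\geq t_0$ (with $c>0$), hence $F(tw(x))\geq c\,t^{K}w(x)^{K}$ on the region where $tw(x)\geq t_0$, which for large $t$ is essentially the whole support of $w$. Since the convolution term is $\int(I_\mu\ast F(tw))F(tw)$, it grows at least like $t^{2K}$ times a fixed positive constant (the double integral $\int\int \frac{w(x)^K w(y)^K}{|x-y|^\mu}$ is finite and strictly positive for such $w$). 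As $2K>2$, the negative term dominates the quadratic term $\tfrac{t^2}{2}\|w\|_{1/2}^2$, so $I(tw)\to -\infty$ as $t\to+\infty$. Choosing $v=t^*w$ with $t^*$ large gives $\|v\|_{1/2}>\varrho$ and $I(v)<0$.

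\textbf{Main obstacle.}
The delicate point is Part (i): one must choose the auxiliary exponents in the Hardy--Littlewood--Sobolev inequality, in the Minkowski/H\"older splitting, and the radius $M$ simultaneously so that (a) every exponential integral that appears stays below the Trudinger--Moser threshold $\alpha=\pi$ on the ball $\|u\|_{1/2}\leq M$, and (b) the resulting powers of $\|u\|_{1/2}$ are all strictly greater than $2$. Because the nonlinearity enters \emph{quadratically} through the Choquard term (so $F(u)$ appears twice and the exponential is effectively squared), one has to be careful that the combined exponential exponent is still controllable; this is exactly why the condition $0<\mu<1$ and the flexibility in choosing $q>2$ in \eqref{growth}--\eqref{growth2} are used. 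Everything else is routine once these parameters are fixed.
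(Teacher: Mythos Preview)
Your proposal is correct and follows essentially the same route as the paper. Part (i) is handled identically: Hardy--Littlewood--Sobolev with $r=t=\tfrac{2}{2-\mu}$, then H\"older plus the fractional Trudinger--Moser inequality on a small ball, yielding $I(u)\geq \tfrac12\|u\|_{1/2}^{2}-C\|u\|_{1/2}^{2(2-\mu)}-C\|u\|_{1/2}^{2q}$. For Part (ii) there is a minor stylistic difference: you first extract the pointwise super-homogeneity $F(t)\geq c\,t^{K}$ from \ref{f4} and then plug it into the double integral, whereas the paper applies the differential inequality $w'(t)/w(t)>2K/t$ directly to the functional $w(t)=\tfrac12\int(I_\mu\ast F(tu_0/\|u_0\|))F(tu_0/\|u_0\|)$ and integrates; both arguments are equivalent and yield the same $t^{2K}$ growth. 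Note also that Part (ii) indeed requires \ref{f4}, which both you and the paper invoke although the lemma's hypotheses as stated list only \ref{f1} and \ref{f3}.
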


\begin{proof}
Let us prove $(i)$. In view of \eqref{growth2} we get
\begin{equation}\label{rj1}
\|F(u)\|_{L^\frac{2}{2-\mu}(\mathbb{R})}\leq \varepsilon\|u\|_{L^2(\mathbb{R})}^{2-\mu}+C\left\{\int_{\mathbb{R}}\left[\left(e^{\pi u^{2}}-1\right)|u|^{q}\right]^{\frac{2}{2-\mu}}\,\mathrm{d}x\right\}^{\frac{2-\mu}{2}}.
\end{equation}
Consider $\varrho>0$ and suppose $\|u\|_{1/2}\leq \varrho$. By H\"older inequality we obtain
\begin{equation}\label{rj2}
\int_{\mathbb{R}}\left[\left(e^{\pi u^{2}}-1\right)|u|^{q}\right]^{\frac{2}{2-\mu}}\,\mathrm{d}x\leq \left[\int_{\mathbb{R}}\left(e^{\frac{4\pi \|u\|^{2}}{2-\mu}\left(\frac{u}{\|u\|_{1/2}}\right)^{2}}-1 \right)\,\mathrm{d}x \right]^{\frac{1}{2}}\|u\|_{L^\frac{4q}{2-\mu}(\mathbb{R})}^{\frac{2q}{2-\mu}}.
\end{equation}
If $\varrho\leq \sqrt{(2-\mu)}/2$, then we are able to apply Trudinger-Moser inequality (see Theorem A). Thus, \eqref{rj1}, \eqref{rj2} jointly with Sobolev embedding imply that
\[
\|F(u)\|_{L^\frac{2}{2-\mu}(\mathbb{R})}\leq \varepsilon\|u\|_{1/2}^{{2-\mu}}+\tilde{C}\|u\|_{1/2}^{{q}}.
\]
Hence, it follows from Hardy-Littlewood-Sobolev inequality that
\[
\int_{\mathbb{R}}(I_{\mu}\ast F(u))F(u)\,\mathrm{d}x\leq \varepsilon^2\|u\|_{1/2}^{2(2-\mu)}+\tilde{C}\|u\|_{1/2}^{{2q}}.
\]
Thus, we have
\[
I(u)\geq \frac{1}{2}\|u\|_{1/2}^{2}-\varepsilon^2\|u\|_{1/2}^{2(2-\mu)}-\tilde{C}\|u\|_{1/2}^{{2q}}.
\]
Since $2(2-\mu)>2$ and $2q>2$, there exist $\tau,\rho>0$  such that  if $\|u\|_{1/2}= \rho$, then $I(u)\geq \tau>0$.

Now in order to prove $(ii)$, take $u_0\in H^{1/2}(\mathbb{R})\setminus\{0\}$, $u_{0}\geq0$, $u_{0}\not\equiv0$ and set
\[w(t)= \frac{1}{2}\int_{\mathbb{R}} \left(I_{\mu}\ast F\left(t\dfrac{u_0}{\|u_0\|_{1/2}}\right)\right)F\left(t\dfrac{u_0}{\|u_0\|_{1/2}}\right)\,\mathrm{d}x, \quad \mbox{for } t>0.\]
It follows from \ref{f4} that
\[
\dfrac{w'(t)}{w(t)}> \dfrac{2K}{t}, \quad \mbox{for } t>0.
\]
Thus, integrating this over $[1, s\|u_0\|_{1/2}]$ with $s>1/\|u_0\|_{1/2}$, we can conclude that
\[
\frac{1}{2} (I_{\mu}\ast F({su_0}))F({su_0})\, \mathrm{d} x \geq\left(\frac{1}{2}\int_{\mathbb{R}} \left(I_{\mu}\ast F\left(\dfrac{u_0}{\|u_0\|_{1/2}}\right)\right)F\left(\dfrac{u_0}{\|u_0\|_{1/2}}\right)\, \mathrm{d} x\right)\|u_0\|_{1/2}^{2K}s^{2K}.
\]
Therefore, from \eqref{ef}, we get
\[I(su_0)< Cs^2 - Cs^{2K}, \quad \mbox{for } s> \dfrac{1}{\|u_0\|_{1/2}}.\]
Since $K>1$, taking $v=su_0$ with $s$ large enough, we have $(ii)$.
\end{proof}

In view of the preceding Lemma \ref{01}, we may apply Mountain Pass Theorem to get a (PS) sequence, i.e., $(u_{n})\subset H^{1/2}(\mathbb{R})$ such that
\[
I(u_{n})\rightarrow c:=\inf_{\gamma\in\Gamma}\max_{t\in[0,1]}I(\gamma(t)) \quad \mbox{and} \quad I^{\prime}(u_{n})\rightarrow0,
\]
where
\[
\Gamma:=\left\{\gamma \in C^{1}([0,1],H^{1/2}(\mathbb{R})):\gamma(0)=0, \ I(\gamma(1))<0\right\}.
\]

\section{Minimax Estimates}\label{mini}

The main difficulty in our work is the lack of compactness typical for elliptic problems in unbounded domains with nonlinearities with critical growth. In order to overcome this, we will make use of assumption \ref{f5} to control the minimax level in a suitable range where we are able to recover some compactness. For this purpose, let us consider the following sequence of nonnegative functions supported in $B_1$ given by
\[
u_n=\left\{
\begin{array}{cl}
\vspace{0,2cm}(\ln{n})^{1/2},&  \mbox{if } |x|< \frac{1}{n},\\
\vspace{0,2cm}\dfrac{\ln{\frac{1}{|x|}}}{(\ln{n})^{1/2}},& \mbox{if } \frac{1}{n}\leq |x|\leq 1,\\
0,& \mbox{if } |x|\geq 1.
\end{array}
\right.
\]
As pointed out in \cite{taka}, $u_{n}\in H^{1/2}(\mathbb{R})$ and we have
\[
\|(-\Delta)^{1/4} u_{n}\|^2_{L^2(\mathbb{R})}\leq\pi\left(1 +\frac{1}{C\ln(n)}\right):= \widetilde{C}_n. \]
 Thus by \eqref{normas}, for $n$ large enough, we have
\[
\|u_{n}\|_{1/2}^{2}\leq \widetilde{C}_n +2\left[\int_{-\frac{1}{n}}^{\frac{1}{n}}\ln(n)\,\mathrm{d}x+\frac{1}{\ln(n)}\int_{-1}^{-\frac{1}{n}}(\ln|x|)^{2}\,\mathrm{d}x+ \frac{1}{\ln(n)}\int_{\frac{1}{n}}^{1}(\ln|x|)^{2}\,\mathrm{d}x\right],
\]
which implies that $\|u_{n}\|_{1/2}^2\leq\widetilde{C}_n+\delta_{n}$ where
\begin{equation*}
\delta_{n}:=4 \left(\frac{n-1-\ln(n)}{n\ln(n)} \right).
\end{equation*}
Notice that
\begin{equation} \label{18}
\delta_n \rightarrow 0 \quad\mbox{ and } \quad \ln(n)\delta_n \rightarrow  4 \quad \mbox{as} \hspace{0,2cm} n\rightarrow+\infty.
\end{equation}
By setting $w_n:=\frac{u_{n}}{\sqrt{\widetilde{C}_n+\delta_{n}}}$ we obtain $\|w_n\|_{1/2}\leq 1$.
 
\begin{proposition}\label{mv}
Assume that $f$ satisfies \ref{f1}--\ref{f5}. Then
\[
c < \frac{2-\mu}{4}.
\]
\end{proposition}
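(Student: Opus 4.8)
\noindent\emph{Proof strategy.} The plan is to bound the minimax level using the normalized Moser-type family $w_n=u_n/\sqrt{\widetilde{C}_n+\delta_n}$, which satisfies $\|w_n\|_{1/2}\le1$. By the mountain pass geometry of Lemma \ref{01}, for each fixed $n$ the path $\theta\mapsto\theta T w_n$, $\theta\in[0,1]$, lies in $\Gamma$ as soon as $T$ is chosen with $I(Tw_n)<0$, so $c\le\sup_{t\ge0}I(tw_n)$; hence it suffices to prove $\sup_{t\ge0}I(tw_n)<(2-\mu)/4$ for some $n$. I would argue by contradiction, assuming $\sup_{t\ge0}I(tw_n)\ge(2-\mu)/4$ for every $n$. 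Since \ref{f4} forces $I(tw_n)\to-\infty$ as $t\to+\infty$ (exactly as in the proof of Lemma \ref{01}), the supremum is attained at some $t_n>0$, at which $I'(t_nw_n)(t_nw_n)=0$, that is,
\[
t_n^2\|w_n\|_{1/2}^2=\int_{\mathbb{R}}\bigl(I_\mu\ast F(t_nw_n)\bigr)f(t_nw_n)(t_nw_n)\,\mathrm{d}x .
\]

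From this I would extract two facts. First, from $I(t_nw_n)\ge(2-\mu)/4$, discarding the nonnegative convolution term and using $\|w_n\|_{1/2}\le1$, one gets $t_n^2\ge(2-\mu)/2$, so $t_n$ is bounded away from $0$. Second, I would bound the integral above from below by keeping only the contribution of $\{|x|<1/n\}\times\{|y|<1/n\}$, on which $w_n\equiv\omega_n:=(\ln n)^{1/2}/(\widetilde{C}_n+\delta_n)^{1/2}$ and $|x-y|\le 2/n$; since $t_n\omega_n\to+\infty$, the inequality \eqref{HUB} applies for $n$ large, and using $e^{2\pi(t_n\omega_n)^2}=n^{2\pi t_n^2/(\widetilde{C}_n+\delta_n)}$ one reaches
\[
t_n^2\ \ge\ t_n^2\|w_n\|_{1/2}^2\ \ge\ c_1\,(t_n^2)^{\frac{a+1}{2}}(\ln n)^{\frac{a+1}{2}}\,n^{E_n},\qquad
E_n:=\frac{2\pi t_n^2}{\widetilde{C}_n+\delta_n}-(2-\mu),
\]
with $c_1>0$ independent of $n$ (it absorbs $2^{2-\mu}$, $(\beta_0-\varepsilon)C_0$ and $(\widetilde{C}_n+\delta_n)^{-(a+1)/2}\ge(2\pi)^{-(a+1)/2}$).

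The contradiction then comes in two steps. The family $\{t_n\}$ must be bounded: if $t_n\to+\infty$ along a subsequence, then for $n$ large $\widetilde{C}_n+\delta_n\le2\pi$ and $t_n^2\ge2(2-\mu)$ give $E_n\ge t_n^2/2$, hence $n^{E_n}\ge 2^{t_n^2/2}$, and the displayed inequality forces $(t_n^2)^{(1-a)/2}\ge c_1(\ln n)^{(a+1)/2}2^{t_n^2/2}$, which is impossible since $a\le1$ makes the left-hand side grow only polynomially in $t_n$. So $t_n\le M$ for all $n$. Moreover, since $\widetilde{C}_n+\delta_n-\pi=\frac{\pi}{C\ln n}+\delta_n$ and $\ln n\,\delta_n\to4$ by \eqref{18}, the product $(\widetilde{C}_n+\delta_n-\pi)\ln n$ stays bounded; writing
\[
E_n\ln n=\bigl(2t_n^2-(2-\mu)\bigr)\ln n-\frac{2t_n^2(\widetilde{C}_n+\delta_n-\pi)}{\widetilde{C}_n+\delta_n}\,\ln n
\]
and using $2t_n^2-(2-\mu)\ge0$ and $t_n\le M$, one gets that $E_n\ln n$ is bounded below, so $n^{E_n}\ge c_2>0$. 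Substituting into the displayed inequality yields $M^2\ge t_n^2\ge c_1c_2\,(\ln n)^{(a+1)/2}\to+\infty$, a contradiction. Hence $\sup_{t\ge0}I(tw_n)<(2-\mu)/4$ for $n$ large, and $c<(2-\mu)/4$.

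The main obstacle is the bookkeeping of the exponent $E_n$ in the last step: the argument closes precisely because the sharp Trudinger--Moser constant $\pi$ matches the $2\pi$ in \eqref{HUB}, and because $\|u_n\|_{1/2}^2\le\widetilde{C}_n+\delta_n$ exceeds $\pi$ only by a term of order $1/\ln n$, so that $n^{E_n}$ cannot decay fast enough to kill the divergent factor $(\ln n)^{(a+1)/2}$. In particular, as advertised in the introduction, no lower bound on $\beta_0$ enters the estimate.
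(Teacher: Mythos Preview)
Your proposal is correct and follows essentially the same route as the paper's proof: argue by contradiction, extract $t_n^2\ge(2-\mu)/2$ from the level assumption, use the critical-point equation together with \eqref{HUB} restricted to $\{|x|,|y|<1/n\}$ to get the key lower bound $t_n^2\ge c\,(\ln n)^{(a+1)/2}\,n^{E_n}$, and then exploit $\widetilde{C}_n+\delta_n=\pi+O(1/\ln n)$ to see that $E_n\ln n$ stays bounded below while the $(\ln n)^{(a+1)/2}$ factor diverges. The only organizational difference is that the paper first drops the $(\ln n)^{(a+1)/2}$ factor to read off boundedness of $t_n$ (and records $t_n^2\to(2-\mu)/2$, which is not actually used afterwards), then reinstates it and takes logarithms, whereas you keep the full inequality throughout and argue boundedness by a direct growth comparison; both variants are fine.
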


\begin{proof}
 
Since
\[
c \leq  \max_{t\in [0,1]} I(tw) \leq \max_{t\geq 0} I(tw),
\]
it is sufficient to prove that there exists a function $w\in H^{1/2}(\mathbb{R})$, $\Vert w \Vert_{1/2}\leq 1$, such that 
\[
\max_{t\geq 0} I(tw)<\frac{2-\mu}{4}.
\]
In order to prove that, we claim that there exists $n_{0}$ such that
\[
\max_{t\geq 0} I(tw_{n_{0}})<\frac{2-\mu}{4}.
\]
Arguing by contradiction, we suppose that for all $n$ we have
\[
I(t_nw_n)=\max_{t\geq 0}  I(tw_n) \geq \frac{2-\mu}{4}.
\]
Since
\[
I(t_nw_n)=\frac{1}{2}\Vert t_nw_n \Vert_{1/2}^{2}-\frac{1}{2}\int_\mathbb{R}\left( \frac{1}{|x|^\mu}\ast F(t_nw_n) \right)F(t_nw_n)\,\mathrm{d}x,
\]
and $f$ is nonnegative, we obtain
\begin{equation}\label{17}
t_{n}^{2}\geq \frac{2-\mu}{2}.
\end{equation}
Moreover, as  $t_n$ satisfies
\[
\left.\frac{d}{dt}I(tw_n)\right|_{t=t_n}=0,
\]
it follows that
\begin{equation}\label{15}
t_{n}^{2}{\geq}\int_{\mathbb{R}}\left( \frac{1}{|x|^\mu}\ast F(t_nw_n) \right)t_nw_nf(t_nw_n)\,\mathrm{d}x.    
\end{equation}
On the other hand, from \eqref{HUB} we obtain
\begin{equation}\label{14}
tf(t)F(t)\geq (\beta_{0}-\varepsilon)C_0e^{2\pi t^2}t^{a+1}, \quad \text{for all }t\geq t_0.    
\end{equation}
Thus, for $n\in\mathbb{N}$ large enough, by using \eqref{15} and \eqref{14}, we have
\[
\begin{alignedat}{2}
t_{n}^{2}&\geq \int_{-\frac{1}{n}}^{\frac{1}{n}}\frac{t_n\sqrt{\ln n}}{\sqrt{\widetilde{C}_n+\delta_{n}}}f\left(\frac{ t_n\sqrt{\ln n}}{\sqrt{\widetilde{C}_n+\delta_{n}}}\right)\,\mathrm{d}y\int_{-\frac{1}{n}}^{\frac{1}{n}}\frac{F\left(\frac{ t_n\sqrt{\ln n}}{\sqrt{\widetilde{C}_n+\delta_{n}}}\right)}{|x-y|^\mu}\,\mathrm{d}x\\
&\geq (\beta_{0}-\varepsilon)C_0\exp\left(\frac{2\pi t_n^2\ln n}{\widetilde{C}_n+\delta_{n}
}\right)\left(\frac{ t_n\sqrt{\ln n}}{\sqrt{\widetilde{C}_n+\delta_{n}}}\right)^{a+1}\int_{-\frac{1}{n}}^{\frac{1}{n}}\int_{-\frac{1}{n}}^{\frac{1}{n}}\frac{\mathrm{d}x\,\mathrm{d}y}{|x-y|^\mu}.
\end{alignedat}
\]
Consequently,
\begin{equation}\label{novaUB}\begin{array}{rcl}
t_{n}^{2}&\geq &\displaystyle \frac{(\beta_{0}-\varepsilon) C_02^{2-\mu}}{(1-\mu)(2-\mu)}  \exp\left( \left[\frac{2\pi t_{n}^{2}}{\widetilde{C}_n+\delta_{n}} - (2-\mu)\right]\ln n \right) \left(\frac{ t_n\sqrt{\ln n}}{{\sqrt{\widetilde{C}_n+\delta_{n}}}}\right)^{a+1}\\
&\geq &	\displaystyle\frac{(\beta_{0}-\varepsilon) C_02^{2-\mu}}{(1-\mu)(2-\mu)}  \exp\left( \left[\frac{2\pi t_{n}^{2}}{\widetilde{C}_n+\delta_{n}} - (2-\mu)\right]\ln n \right).\end{array}
\end{equation}
Thus we conclude that $t_n^2$ is bounded. Moreover, it follows from \eqref{18} and definitions of $\widetilde{C}_n$ and $\delta_n$ that
\[
t_n^2\rightarrow {\frac{2-\mu}{2}}.
\]
We can rewrite \eqref{novaUB} as 
\[
t^2_n\geq\frac{(\beta_{0}-\varepsilon) C_02^{2-\mu}}{(1-\mu)(2-\mu)} \left(\frac{ t_n}{\sqrt{\widetilde{C}_n+\delta_{n}}}\right)^{a+1} \exp\left( \left[\frac{2\pi t_{n}^{2}}{ \widetilde{C}_n+\delta_{n}} - (2-\mu)\right]\ln n +\frac{(a+1)}{2}\ln(\ln(n))\right). 
\]
Since $t_n^2$ is bounded, there exists $C_1>0$ such that
\[ C_1\geq \left[\frac{2\pi t_{n}^{2}}{ \widetilde{C}_n+\delta_{n}} - (2-\mu)\right]\ln n +\frac{a+1}{2}\ln(\ln(n)).\]
Using \eqref{17}, we have
\[
\begin{alignedat}{2}
C_1 &\geq \left[\frac{\pi}{ \widetilde{C}_n+\delta_{n}}-1\right](2-\mu) \ln n +\frac{a+1}{2}\ln(\ln(n)).
\end{alignedat}
\]
Note that by \eqref{18} and the definition of $\widetilde{C}_n$
\[
\left[\frac{\pi}{ \widetilde{C}_n+\delta_{n}}-1\right](2-\mu) \ln n\rightarrow \frac{-\pi-C}{C\pi}(2-\mu), \quad \mbox{as } n\rightarrow\infty,
\]
for some $C>0$. Therefore, as $\ln(\ln(n))\rightarrow +\infty$, we have a contradiction.
\end{proof}

\section{Proof of Theorem~\ref{paper4A}}\label{fs}

Let $(u_{n})$ be the (PS) sequence obtained in Section~\ref{tpmm}. Thus, we have
\begin{equation}\label{ps}
\frac{1}{2}\|u_{n}\|^{2}_{1/2}-\frac{1}{2}\int_{\mathbb{R}}(I_{\mu}\ast F(u_{n}))F(u_{n})\,\mathrm{d}x\rightarrow c, \quad \mbox{as } n\rightarrow \infty,
\end{equation}
and
\begin{equation}\label{epsilon}
\left|\frac{1}{2\pi}\int_{\mathbb{R}}\int_{\mathbb{R}}\frac{[u_{n}(x)-u_{n}(y)][v(x)-v(y)]}{|x-y|^{2}}\,\mathrm{d}x\mathrm{d}y+\int_{\mathbb{R}}u_{n}v\,\mathrm{d}x-\int_{\mathbb{R}}(I_{\mu}\ast F(u_{n}))f(u_{n})v\,\mathrm{d}x \right|\leq \epsilon_{n}\|v\|_{1/2},
\end{equation}
for all $v\in H^{1/2}(\mathbb{R})$, where $\epsilon_{n}\rightarrow0$ as $n\rightarrow\infty$. Similarly to \cite[Lemma 2.4]{alvesetal} we conclude that $(u_n)$ is bounded in $H^{1/2}(\mathbb{R})$, up to a subsequence, $u_{n} \rightharpoonup u$ weakly in $H^{1/2}(\mathbb{R})$ and there holds
\[
\left[I_{\mu}\ast F(u_{n}) \right]F(u_{n})\rightarrow \left[I_{\mu}\ast F(u) \right]F(u), \quad \mbox{in} \hspace{0,2cm} L^{1}_{\mathrm{loc}}(\mathbb{R}).
\]
Let $u_{n}=u_{n}^{+}-u_{n}^{-}$, where $u_{n}^{+}(x)=\max\{u_{n}(x),0\}$ and $u_{n}^{-}=-\min\{u_{n}(x),0\}$. Since $f(t)=0$ for all $t\leq0$, by taking $v_{n}=-u_{n}^{-}$ and using the fact that $(u_{n})$ is a (PS) sequence, we obtain
\begin{align*}
o_{n}(1) = & I^{\prime}(u_{n})(-u_{n}^{-})\\
= & -\frac{1}{2\pi}\int_{\mathbb{R}}\int_{\mathbb{R}}\frac{[u_{n}(x)-u_{n}(y)][u_{n}^{-}(x)-u_{n}^{-}(y)]}{|x-y|^{2}}\,\mathrm{d}x\mathrm{d}y-\int_{\mathbb{R}}u_{n}u_{n}^{-}\,\mathrm{d}x\\
\geq & \Vert u_{n}^{-}\Vert_{1/2}, 
\end{align*}
where we have used that $u_{n}^{+},u_{n}^{-}\geq0$. Thus, $\Vert u_{n}^{-}\Vert_{1/2}\rightarrow0$, as $n\rightarrow\infty$. Hence, we have that
\[
\int_{\mathbb{R}}\int_{\mathbb{R}}\frac{[u_{n}^{+}(x)-u_{n}^{+}(y)][u_{n}^{-}(x)-u_{n}^{-}(y)]}{|x-y|^{2}}\,\mathrm{d}x\mathrm{d}y\rightarrow0, \quad \mbox{as } n\rightarrow\infty,
\]
which implies that $\Vert u_{n}\Vert_{1/2}=\Vert u_{n}^{+}\Vert_{1/2}+o_{n}(1)$. Therefore, $(u_{n}^{+})$ is also a (PS) sequence for funcional $I$. For this reason, we may suppose, without loss of generality, that $(u_{n})$ is a nonnegative Palais--Smale sequence.

Let us now prove that the weak limit $u$ yields actually a weak solution to Problem \eqref{problem0}. Following \cite[Lemma 2.4]{alvesetal}, let $\phi\in C_{0}^{\infty}(\mathbb{R})$ be such that $\mathrm{supp}\,\phi\subset \Omega^\prime$ satisfying $0\leq \phi\leq 1$ and $\phi\equiv 1$ in $\Omega\subset \Omega^\prime$ and define $v_n=\phi/(1+u_n)$. In view of Young's inequality one has
\begin{equation}\label{21}
\begin{alignedat}{2}
\int_{\mathbb{R}}\int_{\mathbb{R}}& \frac{[u_n(x)-u_n(y)][v_n(x)-v_n(y)]}{\vert x-y \vert^2}\,\mathrm{d}x\mathrm{d}y\leq \frac{1}{2}[u_n]_{1/2}^2+\frac{1}{2}[v_n]_{1/2}^2\\
&= \frac{1}{2}[u_n]_{1/2}^2+\frac{1}{2}\int_{\mathbb{R}}\int_{\mathbb{R}}\frac{[(1+u_n(y))\phi(x)-(1+u_n(x))\phi(y)]^2}{(1+u_n(x))^2(1+u_n(y))^2\vert x-y \vert^2}\,\mathrm{d}x\mathrm{d}y\\
&\leq \frac{1}{2}[u_n]_{1/2}^2+\frac{1}{2}\int_{\mathbb{R}}\int_{\mathbb{R}}\frac{[(1+u_n(y))\phi(x)-(1+u_n(x))\phi(y)]^2}{\vert x-y \vert^2}\,\mathrm{d}x\mathrm{d}y\\
&\leq C\left( [u_n]_{1/2}^2 + [\phi]_{1/2}^2 + [u_n\phi]_{1/2}^2\right).
\end{alignedat}
\end{equation}
Notice that
\begin{equation}\label{est1}
\begin{alignedat}{2}
\int_{\mathbb{R}}\int_{\mathbb{R}}&\frac{|u_{n}(x)\phi(x)-u_{n}(y)\phi(y)|^{2}}{|x-y|^{2}}\,\mathrm{d}x\mathrm{d}y\\
&\leq C_{1}\int_{\mathbb{R}}\int_{\mathbb{R}}\left[ \frac{|u_{n}(x)\phi(x)-u_{n}(y)\phi(x)|^{2}}{|x-y|^{2}}+\frac{|u_{n}(x)|^{2}|\phi(x)-\phi(y)|^{2}}{|x-y|^{2}}\right]\,\mathrm{d}x\mathrm{d}y\\
& \leq \tilde{C_{1}}[u_{n}]_{1/2}^{2}+C_{2}(\phi)\int_{\mathbb{R}}u_{n}^{2}\,\mathrm{d}x\\
&\leq C(\phi)\Vert u_n \Vert_{1/2}^2,
\end{alignedat}
\end{equation}
where $C_{2}(\phi)$ is a constant which depends on $\phi$. By using \eqref{epsilon}, \eqref{21} and \eqref{est1} we obtain
\[
\begin{alignedat}{2}
\int_{\Omega}&\left[ \frac{1}{\vert x \vert^\mu}\ast F(u_n)\right]\frac{f(u_n)}{1+u_n}\,\mathrm{d}x\leq \int_{\mathbb{R}}\left[ \frac{1}{\vert x \vert^\mu}\ast F(u_n)\right]\frac{f(u_n)\phi}{1+u_n}\,\mathrm{d}x\\
&=\frac{1}{2\pi}\int_{\mathbb{R}}\int_{\mathbb{R}}\frac{[u_n(x)-u_n(y)][v_n(x)-v_n(y)]}{\vert x-y \vert^2}\,\mathrm{d}x\mathrm{d}y+\int_{\mathbb{R}}u_nv_n\,\mathrm{d}x+\epsilon_n\Vert v_n \Vert_{1/2}\\
&\leq C\left( [u_n]_{1/2}^2 + [\phi]_{1/2}^2 + [u_n\phi]_{1/2}^2\right)+\int_{\Omega^\prime}u_n\,\mathrm{d}x+\epsilon_n\Vert v_n \Vert_{1/2}\\
&\leq \tilde{C}(\phi)\Vert u_n \Vert_{1/2}^2 + C_3(\phi)+\int_{\Omega^\prime}u_n\,\mathrm{d}x+\epsilon_n\Vert v_n \Vert_{1/2}.
\end{alignedat}
\]
Since $(u_n)$ is bounded in $H^{1/2}(\mathbb{R})$ and $u_n\rightarrow u$ in $L^1(\Omega^\prime)$ we conclude that
\[
\int_{\Omega}\left[ \frac{1}{\vert x \vert^\mu}\ast F(u_n)\right]\frac{f(u_n)}{1+u_n}\,\mathrm{d}x\leq \overline{C}(\phi).
\]
Thus, by a Radon-Nikodym argument, we can conclude that
\[
\lim_{n\rightarrow\infty}\int_{\mathbb{R}}(I_{\mu}\ast F(u_{n}))f(u_{n})\phi\,\mathrm{d}x=\int_{\mathbb{R}}(I_{\mu}\ast F(u))f(u)\phi\,\mathrm{d}x, \quad \mbox{for all } \phi\in C^{\infty}_{c}(\mathbb{R}).
\]
Therefore, $u$ is a weak solution for Problem \eqref{problem0}.  If $u\neq0$, then the proof is done. Suppose that $u=0$. We claim that there exists $R,\delta>0$ and a sequence $(y_{n})\subset\mathbb{Z}$ such that
\begin{equation}\label{nonvanish}
\lim_{n\rightarrow+\infty}\int_{y_{n}-R}^{y_{n}+R}|u_{n}|^{2}\,\mathrm{d}x\geq\delta.
\end{equation}
Suppose by contradiction that \eqref{nonvanish} does not hold. Thus, for any $R>0$, there holds
\[
\lim_{n\rightarrow+\infty}\sup_{y\in\mathbb{R}}\int_{y-R}^{y+R}|u_{n}|^{2}\,\mathrm{d}x=0.
\]
In view of Lemma~\ref{lion}, $u_{n}\rightarrow0$ strongly in $L^{p}(\mathbb{R})$, for $2<p<\infty$. Similarly to \cite{alvesetal} we may conclude that
\begin{equation}\label{erj11}
\left[I_{\mu}\ast F(u_{n}) \right]F(u_{n})\rightarrow0, \quad \mbox{in } L^{1}(\mathbb{R}).
\end{equation}
Hence, in view of Proposition \ref{mv}, \eqref{ps} and \eqref{erj11} one has 
\[
\lim_{n\rightarrow+\infty}\|u_{n}\|_{1/2}^{2}=2c<\frac{2-\mu}{2}.
\]
Thus, there exists $\delta>0$ small and $n_{0}\in\mathbb{N}$ large such that
\begin{equation}\label{erj13}
\|u_{n}\|_{1/2}^{2}\leq \frac{2-\mu}{2}(1-\delta), \quad \mbox{for all } n\geq n_{0}.
\end{equation}
In light of Hardy-Littlewood-Sobolev inequality we have
\[
\int_{\mathbb{R}}\left(I_{\mu}\ast F(u_n) \right)f(u_n)u_n\,\mathrm{d}x\leq C\|F(u_{n})\|_{L^\frac{2}{2-\mu}(\mathbb{R})}\|f(u_{n})u_{n} \|_{L^\frac{2}{2-\mu}(\mathbb{R})}.
\]
By using \eqref{growth}, for any $\varepsilon>0$ and $q>2$ there is $\delta>0$ such that
\[
\|f(u_{n})u_{n} \|_{L^\frac{2}{2-\mu}(\mathbb{R})} \leq \varepsilon\|u_{n}\|_{L^2(\mathbb{R})}^{2-\mu}+C_{\varepsilon}\left[\int_{\mathbb{R}}(e^{\pi u_{n}^{2}}-1)^{\frac{2}{2-\mu}}|u_{n}|^{\frac{2q}{2-\mu}}\,\mathrm{d}x \right]^{\frac{2-\mu}{2}}.
\]
Let us consider $\sigma>1$ close to $1$ and $r,r'>1$ such that $1/r+1/r'=1$. Thus, one has
\[
\left[\int_{\mathbb{R}}(e^{\pi u_{n}^{2}}-1)^{\frac{2}{2-\mu}}|u_{n}|^{\frac{2q}{2-\mu}}\,\mathrm{d}x \right]^{\frac{2-\mu}{2}}\leq \|u_{n}\|_{L^\frac{2qr'}{2-\mu}(\mathbb{R})}^{q}\left[\int_{\mathbb{R}}(e^{\frac{2\sigma r}{2-\mu}\pi u_{n}^{2}}-1)\,\mathrm{d}x \right]^{\frac{2-\mu}{2r}}. 
\]
By choosing $\sigma,r>1$ sufficiently close to $1$ such that
\[
1<\sigma r<\frac{1}{1-\delta} \quad \mbox{and} \quad \frac{2qr'}{2-\mu}>2,
\]
it follows from \eqref{erj13} that
\[
\frac{2\sigma r}{2-\mu}\|u_{n}\|_{1/2}^{2}<1, \quad \mbox{for all } n\geq n_{0}.
\]
Thus, in view of Theorem \ref{tmm} we obtain
\begin{equation}\label{erj14}
\int_{\mathbb{R}}(e^{\frac{2\sigma r}{2-\mu}\pi u_{n}^{2}}-1)\,\mathrm{d}x=\int_{\mathbb{R}}(e^{\frac{2\sigma r}{2-\mu}\|u_{n}\|^{2}\pi \frac{u_{n}^{2}}{\|u_{n}\|^{2}_{1/2}}}-1)\,\mathrm{d}x\leq C, \quad \mbox{for all } n\geq n_{0}.
\end{equation}
Therefore, by using Lemma~\ref{lion} and combining \eqref{erj13}--\eqref{erj14} we conclude that
\[
\int_{\mathbb{R}}\left(I_{\mu}\ast F(u_n) \right)f(u_n)u_n\,\mathrm{d}x\rightarrow0, \quad \mbox{as } n\rightarrow\infty.
\]
Since $(u_{n})$ is a (PS) sequence we have that
\[
0<c=\frac{1}{2}\|u_{n}\|_{1/2}^{2}+o(1) \quad \mbox{and} \quad o(1)=\|u_{n}\|_{1/2}^{2}, 
\]
which is not possible. Therefore, \eqref{nonvanish} is satisfied. Since $I$ does not depend on $x$, we can say that it is a periodic functional with respect to this variable. In view of this periodicity of the energy functional  we are able to use a standard argument to get a (PS) sequence which for simplicity we also denote $(u_{n})$, such that $u_{n}\rightharpoonup u\neq0$ and $I^{\prime}(u)=0$, that is, $u$ is a nontrivial weak solution to Problem \eqref{problem0} (see \cite{ManassesYane}), which finishes the proof of Theorem~\ref{paper4A}.

\begin{remark} 
Let $u\in H^{1/2}(\mathbb{R})$ be the weak solution obtained in Theorem~\ref{paper4A}. By choosing the negative part $u^{-}\in H^{1/2}(\mathbb{R})$ as test function and using the inequality
\[
[u(x)-u(y)][u^{-}(x)-u^{-}(y)]\geq \vert u^{-}(x)-u^{-}(y)\vert^{2}, \quad \mbox{for all } x,y\in\mathbb{R},
\]
one may conclude that $\Vert u^{-}\Vert_{1/2}\leq 0$. Therefore, the weak solution $u$ is nonnegative. By using regularity theory and \cite[Theorem 1.2]{pm} one may conclude that $u$ is positive.
\end{remark}


\begin{thebibliography}{99}

\bibitem{ad}
Adimurthi, Yadava, S.L.:
Multiplicity results for semilinear elliptic equations in bounded domain of $\mathbb{R}^2$ involving critical exponent,
Ann. Sc. Norm. Super. Pisa \textbf{17}, 481–504 (1990).

\bibitem{Ub}
Albuquerque, F. S. B., Ferreira, M. C., Severo, U. B.:
Ground state solutions for a nonlocal equation in $\mathbb{R}^{2}$ involving vanishing potentials and exponential critical growth,
arXiv.

\bibitem{alvesetal}
Alves, C. O., Cassani, D., Tarsi, C., Yang, M.:
Existence and concentration of ground state solutions for a critical nonlocal Schr\"{o}dinger equation in $\mathbb{R}^{2}$,
J. Differential Equations \textbf{261}, no. 3, 1933--1972 (2016). 

\bibitem{alvmimbo}
Alves, C. O., Yang, M.:
Existence of solutions for a nonlocal variational problem in $\mathbb{R}^2$ with exponential critical growth,
J. Convex Anal. \textbf{24}, 1197-1215 (2017). 

\bibitem{nlaplacian}
Arora, R., Giacomoni, J., Mukherjee, T., Sreenadh, K.:
$n$-Kirchhoff-Choquard equations with exponential nonlinearity,
Nonlinear Anal. \textbf{186}, 113–144 (2019).

\bibitem{caffa}
Caffarelli, L.:
Non-local diffusions, drifts and games,
Nonlinear Partial Differential Equations, Abel Symp. \textbf{7}, Springer, Heidelberg, 37--52 (2012).

\bibitem{cao}
Cao, D.
Nontrivial solution of semilinear elliptic equation with critical exponent in $\mathbb{R}^2$,
 Comm. Partial Differential Equations 17 (1992) 407-435.

\bibitem{choq1}
Choquard, P., Stubbe, J.:
The one-dimensional Schr\"odinger-Newton equations,
Lett. Math. Phys. \textbf{81}, 177–184 (2007).

\bibitem{choq2}
Choquard, P., Stubbe, J., Vuffray, M.:
Stationary solutions of the Schr\"odinger-Newton model - an ODE approach,
Differential Integral Equations 21, 665–679 (2008).

\bibitem{Miyagaki}
 de Figueiredo, D. G., Miyagaki, O. H., Ruf, B.:
Elliptic equations in $\mathbb{R}^{2}$ with nonlinearities in the critical growth range,
Calc. Var. Partial Differential Equations 3, 139--153 (1995).

\bibitem{pm}
Del Pezzo, L.M., Quaas, A.:
A Hopf's lemma and a strong minimum principle for the fractional p-Laplacian,
J. Differential Equations \textbf{263}, no. 1, 765--778 (2017).

\bibitem{ManassesYane}
de Souza, M., Araújo, Y.L.:
On nonlinear perturbations of a periodic fractional Schr\"odinger equation with critical exponential growth,
Math. Nachr. \textbf{289}, 610–625 (2016).

\bibitem{guia}
Nezza, E. Di, Palatucci, G., Valdinoci, E.:
Hitchhiker's guide to the fractional Sobolev spaces,
Bull. Sci. Math. \textbf{136}, 521-573 (2012).

\bibitem{Pawan}
Giacomoni, J., Mishra, P, Sreenadh, K.:
Critical growth problems for $1/2$-Laplacian in $\mathbb{R}$,
Differ. Equ. Appl. \textbf{8}, 295-317 (2016).

\bibitem{iulamar}
Iula, S., Maalaoui, A., Martinazzi, L.:
A fractional Moser-Trudinger type inequality in one dimension and its critical points,
Differential Integral Equations \textbf{29}, no. 5/6, 455--492 (2016).

\bibitem{Wadade}
 Kozono, H., Sato, T., Wadade, H.: 
Upper bound of the best constant of a Trudinger-Moser inequality and its application to a Gagliardo-Nirenberg inequality,
Indiana Univ. Math. J. \textbf{55}, 1951--1974 (2006).

\bibitem{choquard}
Lieb, E.:
Existence and uniqueness of the minimizing solution of Choquard nonlinear equation,
Studies in Appl. Math. \textbf{57}, 93--105 (1976/77).

\bibitem{Li}
Li, S., Shen,  Z., Yang, M.:
Multiplicity of solutions for a nonlocal nonhomogeneous elliptic equation with critical exponential growth,
J. Math. Anal. Appl. \textbf{475}, 1685-1713 (2019).

\bibitem{lions}
Lions, P. L.:
The Choquard equation and related questions,
Nonlinear Anal. \textbf{4}, 1063--1072 (1980).

\bibitem{lionss} 
Lions, P.L.:
The concentration-compactness principle in the calculus of variations. The locally compact case,
Ann. Inst. H. Poincar\'e Anal. Non Lin\'{e}aire \textbf{1}, 109-145 and 223-283 (1984).

\bibitem{moroz}
Moroz, V., Van Schaftingen, J.:
Existence of ground states for a class of nonlinear Choquard equations,
Trans. Amer. Math. Soc. {\bf 367} , no. 9, 6557--6579 (2015).

\bibitem{moroz2}
Moroz, V., Van Schaftingen, J.:
Ground states of nonlinear Choquard equations: Existence, qualitative properties and decay asymptotics,
J. Funct. Anal. \textbf{265}, 153--184 (2013).

\bibitem{Ozawa}
 Ozawa, T.:
On critical cases of Sobolev's inequalities,
J. Funct. Anal. \textbf{127}, 259--269 (1995).

\bibitem{pekar}
Pekar, S.:
Untersuchung\"uber Die Elektronentheorie Der Kristalle,
Akademie Verlag, Berlin, 1954.

\bibitem{penrose}
Penrose, R.:
On gravity role in quantum state reduction,
Gen. Relativ. Gravitat. \textbf{28}, 581600 (1996).

\bibitem{subcritical}
Sun, X., Zhu, A.:
Multi-peak solutions for nonlinear Choquard equation in the plane,
J. Math. Phys. \textbf{60}, 1--18 (2019). 

\bibitem{taka}
Takahashi, F.:
Critical and subcritical fractional Trudinger-Moser-type inequalities on $\mathbb{R}$.
Advances in Nonlinear Analysis \textbf{1}, 868--884 (2019).

\bibitem{Mimbo}
 Yang, M.:
Semiclassical ground state solutions for a Choquard type equation in $\mathbb{R}^2$ with critical exponential growth,
ESAIM Control Optim. Calc. Var. \textbf{24}, 177-209 (2018).

\end{thebibliography}
\end{document}